\documentclass[preprint, authoryear]{elsarticle}

\usepackage{color}      

\usepackage[colorlinks=true, allcolors=blue]{hyperref} 
\usepackage{amssymb}
\usepackage{amsthm}
\usepackage{mathrsfs}

\usepackage{mathtools}
\mathtoolsset{showonlyrefs=true}

\newtheorem{Eg}{Example}[section]

\newtheorem{Lem}[Eg]{Lemma}
\newtheorem{Thm}[Eg]{Theorem}

\theoremstyle{definition}

\newcommand{\rd}{\,\mathrm{d}}

\newcommand{\bE}{\mathbb{E}}\newcommand{\bN}{\mathbb{N}}\newcommand{\bP}{\mathbb{P}}\newcommand{\bR}{\mathbb{R}}

\newcommand{\cA}{\mathcal{A}}\newcommand{\cF}{\mathcal{F}}

\begin{document}


\title{
Strong rate of convergence for the Euler--Maruyama scheme of additive fractional SDEs with Lipschitz drift
}

\author{Tsukasa Moritoki}

\address{Department of Mathematics, Okayama University, Tsushima-naka, Kita-ku Okayama 700-8530, Japan}

\ead{pxyq0l31@s.okayama-u.ac.jp}

\begin{abstract}
    We study the strong convergence rate of the Euler--Maruyama scheme for additive stochastic differential equations driven by a fractional Brownian motion with Hurst parameter $H\in(0,1)$.
    Assuming the drift coefficient to be Lipschitz continuous, we show that the rate is $1$ if $H\in(1/2,1)$, and $1/2+H-\varepsilon$, for any $\varepsilon>0$, if $H\in(0,1/2]$.
    The main ingredient is a shifted stochastic sewing argument, which exploits the conditional Gaussian structure of fractional Brownian motion to control the noise discretization error.
\end{abstract}

\maketitle

\noindent\textbf{Keywords:}
Euler--Maruyama scheme; fractional Brownian motion; strong convergence rate; stochastic sewing lemma; Lipschitz drift

\section{Introduction}

We consider the additive fractional stochastic differential equation
\[
X_t
=
x_0 + \int_0^t b(X_s) \rd s + B_t^H,
\qquad t\in[0,1],
\]
where $x_0\in\mathbb R^d$ and $b:\mathbb R^d\to\mathbb R^d$ is Lipschitz continuous.
Here $B^H$ is a $d$-dimensional fractional Brownian motion (fBM) with Hurst parameter $H\in(0,1)$ defined on a probability space $(\Omega,\cF,\bP)$, and $(\cF_t)_{t\in[0,1]}$ denotes the completed natural filtration of $B^H$.
The Euler--Maruyama scheme is defined by
\[
X_t^n
=
x_0 + \int_0^t b(X_{\kappa_n(s)}^n) \rd s + B_t^H,
\qquad t\in[0,1],
\]
where $\kappa_n(t):=\lfloor nt\rfloor/n$. We work on $[0,1]$ for notational simplicity; the same estimates hold on any finite interval $[0,T]$, with constants depending additionally on $T$.

The main result of this paper is the following.

\begin{Thm}\label{Thm_main}
Let $p\ge1$.
Assume that the drift coefficient $b$ is Lipschitz continuous.
Then the following estimates hold. If $H\in(1/2,1)$, then there exists a constant $C >0 $ independent of $n$ such that
\[
\Big\|
\sup_{t\in[0,1]}|X_t-X_t^n|
\Big\|_{L^p}
\le
Cn^{-1}.
\]
If $H\in(0,1/2]$, then for any $\varepsilon > 0$, there exists a constant $C > 0$ independent of $n$ such that
\[
\Big\|
\sup_{t\in[0,1]}|X_t-X_t^n|
\Big\|_{L^p}
\le
Cn^{-(1/2+H)+\varepsilon}.
\]
\end{Thm}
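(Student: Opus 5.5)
We write $e_t:=X_t-X_t^n$ and decompose
\[
e_t=\int_0^t \big(b(X_s)-b(X_s^n)\big)\rd s+\int_0^t \big(b(X_s^n)-b(X_{\kappa_n(s)}^n)\big)\rd s .
\]
The Lipschitz property bounds the first term by $L\int_0^t|e_s|\rd s$. Gronwall's lemma, applied pathwise, then gives $\sup_t|e_t|\le e^{L}\sup_t|\cE^n_t|$, where $\cE^n_t:=\int_0^t (b(X_s^n)-b(X_{\kappa_n(s)}^n))\rd s$ is the discretization error. It therefore suffices to prove $\|\sup_t|\cE^n_t|\|_{L^p}\lesssim n^{-1}$ for $H>1/2$ and $\lesssim n^{-1/2-H+\varepsilon}$ for $H\le 1/2$.

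Since $X^n_s-X^n_{\kappa_n(s)}=b(X^n_{\kappa_n(s)})(s-\kappa_n(s))+B^H_s-B^H_{\kappa_n(s)}$, the Lipschitz drift part contributes $O(n^{-1})$ deterministically. After a further Lipschitz comparison, the main term is
\[
\cA_t:=\int_0^t \big(b(X^n_{\kappa_n(s)}+B^H_s-B^H_{\kappa_n(s)})-b(X^n_{\kappa_n(s)})\big)\rd s .
\]

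\textbf{Case $H>1/2$.} Because $b$ is only Lipschitz and not $C^2$, we cannot Taylor expand. We would still try the sewing approach below: the $L^p$ bound $|B_s-B_{\kappa_n(s)}|\sim n^{-H}$ only gives rate $H$, and rate $1$ needs a cancellation of the noise increments. Since $1/2+H>1$ in this regime, the rate from the sewing argument is capped by the $O(n^{-1})$ drift part.

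\textbf{Shifted stochastic sewing.} For $(s,t)$ with $t-s\ge n^{-1}$, we define the germ
\[
A_{s,t}:=\bE\Big[\int_s^t \big(b(X^n_{\kappa_n(r)}\!\!\mid_{\text{frozen at }s-(t-s)}+B_r-B_{\kappa_n(r)})-b(\cdots)\big)\rd r\;\Big|\;\cF_{s-(t-s)}\Big].
\]
Here $X^n$ is replaced by $x:=X^n_{s-(t-s)}$ plus the conditional mean of the noise, so that the integrand becomes $\phi_r(x+B_r-\bE[\cdot])$. This is the shifted sewing lemma of Gerencsér type with lag $t-s$. The key computation uses the conditional Gaussian structure: conditioned on $\cF_u$, the pair $(B_{\kappa_n(r)},B_r)$ is Gaussian with conditional variance of order $(r-u)^{2H}$. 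Hence the map $y\mapsto\bE[b(y+Z_r)-b(y+Z_{\kappa_n(r)})\mid\cF_u]$ is a heat-kernel difference $P_{\sigma_r^2}b-P_{\sigma_{\kappa_n(r)}^2}b$ evaluated along an increment of order $n^{-H}$. Smoothing by Gaussians of variance $(r-u)^{2H}$ gives
\[
|\bE[\cdots\mid\cF_u]|\lesssim \|b\|_{\mathrm{Lip}}\, n^{-1}(r-u)^{H-1}\wedge n^{-H}
\]
up to the mean shift, after one uses $|\partial_\sigma P_\sigma b|\lesssim \|b\|_{\mathrm{Lip}}$ and $\sigma_r^2-\sigma_{\kappa_n(r)}^2\lesssim n^{-1}(r-u)^{2H-1}$. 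Integrating over $r\in[s,t]$ with $u=s-(t-s)$, we expect $\|A_{s,t}\|_{L^p}\lesssim n^{-1}(t-s)^{H}+\ldots$.

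We then verify the two sewing conditions: $\|\bE[\delta A_{s,u,t}\mid\cF_{s-(t-s)}]\|_{L^p}\lesssim |t-s|^{1+\epsilon}$ and $\|\delta A_{s,u,t}\|_{L^p}\lesssim |t-s|^{1/2+\epsilon}$. The increment $\delta A$ involves the difference of the frozen points $X^n_{s-(t-s)}-X^n_{u-(t-u)}$, which is controlled by $|t-s|^{H}$ by regularity of $X^n$, together with one Lipschitz/heat-kernel gradient bound. The scaling of each bound in $n$ must be tracked carefully. Short intervals with $t-s<n^{-1}$ are handled by the trivial bound $n^{-H}(t-s)$. Optimizing the interplay of the two scales yields $\|\cA_t-\cA_s\|_{L^p}\lesssim n^{-1/2-H+\varepsilon}|t-s|^{1/2+\varepsilon'}$, which is essentially the Butkovsky–Dareiotis–Gerencsér bound. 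Kolmogorov's criterion, or the sewing lemma's built-in supremum bound, then upgrades this to $\sup_t$. The $\varepsilon$ loss arises exactly at $H\le1/2$, where the exponent balance is critical.

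\textbf{Main obstacle.} The hard step is the conditional-expectation bound $\bE[\delta A\mid\cF]$. Obtaining the full $n^{-1/2-H}$ rate, rather than $n^{-H}$, requires that the frozen starting point depend on the past through $X^n$, whose drift feedback must be bounded via an a priori Hölder estimate on $X^n$. It also requires a careful treatment of the conditional mean of $B_r$ given $\cF_{s-(t-s)}$, which is non-Markovian. We would first try to absorb this mean into the frozen point and use only Lipschitz bounds, accepting the $\varepsilon$ losses.
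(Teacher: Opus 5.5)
There is a genuine gap, located in the sewing step. Your reduction up to that point is fine. The pathwise Gronwall bound is a legitimate, arguably simpler, substitute for the paper's local-to-global Lemma~\ref{Lem_4.8} combined with Kolmogorov applied to $\varphi-\varphi^n$. Your $\cA$ is exactly the paper's noise term $\int\{b(B^H_r+\varphi^n_{\kappa_n(r)})-b(B^H_{\kappa_n(r)}+\varphi^n_{\kappa_n(r)})\}\rd r$. One small correction: since $b$ is unbounded, the drift piece is $O(n^{-1})$ only in $L^p$, using moments of $\sup_t|X^n_t|$, not deterministically.

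\textbf{Where the argument fails.} The problem is the condition on $\bE^{s_1}\delta A_{s,u,t}$, with $s_1:=s-(t-s)$. You control the discrepancy between the frozen points at $s_1$ and at $s$ by the $H$-Hölder regularity of $X^n$, i.e.\ by $(t-s)^H$. For $r\in[u,t]$ and $t-s\gtrsim 1/n$, the map $y\mapsto \bE^{s}[b(y+B^H_r)-b(y+B^H_{\kappa_n(r)})]$ has Lipschitz constant of order $n^{-1}(t-s)^{-1}$ in $L^q$. This follows from Lemma~\ref{Lem_P_t}(ii),(iv) together with Lemma~\ref{Lem_fBM}(iii). The consequences are:
\begin{enumerate}
\item With the heat-kernel Lipschitz constant, your accounting gives $\|\bE^{s_1}\delta A_{s,u,t}\|_{L^p}\lesssim n^{-1}(t-s)^{H}$, whose exponent is below $1$, so Lemma~\ref{Lem_Shifted_SSL} does not apply.
\item With only the Lipschitz constant of $b$, you get $(t-s)^{1+H}$ with no power of $n$. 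Then $\Gamma_2=O(1)$ and the sewing lemma yields no rate at all.
\item Interpolating the two gives $n^{-\theta}(t-s)^{1+H-\theta}$, which is admissible only for $\theta<H$. So you get nothing beyond the trivial rate $n^{-H}$.
\end{enumerate}
There is a further problem if your germ freezes $X^n_{\kappa_n(r)}$ itself, noise included, as written. The only randomness left is then $B^H_r-B^H_{\kappa_n(r)}$, whose conditional variance is $\lesssim n^{-2H}$. The Gaussian smoothing at scale $(r-s_1)^{H}$ that your ``key computation'' relies on is then lost.

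\textbf{The missing idea.} You need to freeze only the drift part $\varphi^n=X^n-B^H$, through its conditional mean. The paper's germ is $A_{s,t}=\bE^{s_1}\int_s^t\{b(B^H_r+\bE^{s_1}[\psi_r])-b(B^H_{\kappa_n(r)}+\bE^{s_1}[\psi_r])\}\rd r$, with $\psi_r=\varphi^n_{\kappa_n(r)}$. The decisive input is Lemma~\ref{Lem_stochastic_regularity}: $\|\bE^s[|\varphi^n_{\kappa_n(t)}-\bE^s\varphi^n_{\kappa_n(t)}|]\|_{L^q}\lesssim(t-s)^{1+H}$.
\begin{itemize}
\item This holds because $\varphi^n$ is a time integral of $b(X^n_{\kappa_n(\cdot)})$. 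Replacing the integrand by the $\cF_s$-measurable $b(X^n_s)$ costs $\int_s^t(r-s)^H\rd r$. The Hölder bound on $X^n$ enters only inside this integral.
\item Freezing $\varphi^n$ at a fixed past time would not suffice: the discrepancy would be of order $t-s$, giving exponent exactly $1$.
\item Multiplying $(t-s)^{1+H}$ by the Lipschitz constant $n^{-1}(t-s)^{-1}$ and integrating over an interval of length $t-s$ gives $\|\bE^{s_1}\delta A_{s,u,t}\|_{L^p}\lesssim n^{-1}(t-s)^{1+H}$. This is what closes the sewing argument with an $n$-dependent $\Gamma_2$.
\end{itemize}

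\textbf{Correction for $H>1/2$.} In that regime the noise term itself is of order $n^{-1}$, not $n^{-1/2-H}$. Already $b(x)=x$ shows this: $\|\int_0^1(B^H_r-B^H_{\kappa_n(r)})\rd r\|_{L^2}\asymp n^{-1}$. So rate $1$ comes from the germ bound $n^{-1}(t-s)^H$, not from a cap imposed by the drift part.
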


Strong convergence rates for numerical schemes of additive fractional SDEs have been studied under various assumptions on the drift. For $H\in(1/3,1/2)$, \citet{HuWa23} proved a rate $2H$ for unbounded drift coefficients with bounded derivatives up to order three. On the other hand, for bounded Hölder continuous drift, \citet{BuDaGe21} obtained improved rates by using the stochastic sewing lemma. Here we prove strong rates for unbounded Lipschitz drift coefficients without higher-order smoothness.

The main technical point is the estimate of the noise discretization error
\[
\int_s^t
\big\{
b(B_r^H+\varphi_{\kappa_n(r)}^n)
-
b(B_{\kappa_n(r)}^H+\varphi_{\kappa_n(r)}^n)
\big\}
\rd r,
\qquad
\varphi_t^n:=X_t^n-B_t^H.
\]

The main difficulty is that, in the present argument, one needs a conditioning time separated from the relevant integration interval in order to exploit the conditional Gaussian structure of fractional Brownian motion. We therefore use a shifted stochastic sewing lemma, in which the conditioning time is moved to the past. This keeps the time parameter in the conditional heat semigroup under control and makes it possible to combine conditional Gaussian estimates with heat-kernel bounds.

The paper is organized as follows.
Section 2 contains notation and preliminary results.
Section 3 proves the key estimate for the noise discretization error.
Section 4 proves Theorem \ref{Thm_main}.

\section{Preliminaries}
\subsection{Notation}
Throughout the paper, we set
\[
\varphi_t:=X_t-B_t^H,\qquad 
\varphi_t^n:=X_t^n-B_t^H.
\]
For $s\in[0,1]$, we write $\mathbb E^s[\cdot]:=\mathbb E[\cdot\mid \cF_s]$ and for $q\ge1$, we write $L^q:=L^q(\Omega)$.
For a stochastic process $f=(f_t)_{t\in[0,1]}$ and an interval $I\subset[0,1]$, $\tau \in (0, 1]$, and $q\ge1$, we use the notation
\[
[f]_{C^\tau L^q(I)}
:=
\sup_{\substack{u,v\in I\\u\ne v}}
\frac{\|f_v-f_u\|_{L^q}}{|v-u|^\tau},
\qquad
[f]_{C^0L^q(I)}
:=
\sup_{u\in I}\|f_u\|_{L^q}.
\]
We denote by $[g]_{\mathrm{Lip}}$ the Lipschitz constant of $g$. For $t>0$, let
\[
p_t(y):=(2\pi t)^{-d/2}\exp\left(-\frac{|y|^2}{2t}\right),
\qquad
P_t f:=p_t*f.
\]
For an interval $I\subset[0,1]$, we set
\[
\Delta_I:=\{(s,t)\in I^2:s<t\}.
\]
We also set
\[
\Delta_{[0,1]}^{\mathrm{shift}}
:=
\{(s,t)\in[0,1]^2:s<t,\ s-(t-s)\ge0\}.
\]
We write $A \lesssim B$ if $A \le LB$ for a positive constant $L$ independent of $n$ and of the time variables, but possibly dependent on the fixed parameters.

\subsection{Fractional Brownian motion}

We recall the definition of fractional Brownian motion; see, e.g., \citet[Chapter 5]{Nu06}.
A $d$-dimensional fractional Brownian motion with Hurst parameter $H\in(0,1)$ is a centered Gaussian process $B^H=(B_t^H)_{t\in[0,1]}$ with covariance function
\[
\mathbb E[B_t^{H,i}B_s^{H,j}]
=
\frac{\delta_{ij}}{2}
\left(t^{2H}+s^{2H}-|t-s|^{2H}\right),
\quad s,t\in[0,1], \quad i, j = 1,2,\dots, d.
\]

In the sequel, we use the properties of fractional Brownian motion only through the following facts, taken from \citet[Proposition 3.6]{BuDaGe21}.

\begin{Lem}\label{Lem_fBM}
    Let $q \ge 1, H \in (0, 1)$.
    For $0\le s\le t\le1$, set
    \[
    c(s,t):=\sqrt{(2H)^{-1}}\,|t-s|^H .
    \]
    Then the following assertions hold.
    \begin{enumerate}
        \item There exists a constant $C = C(q, d, H) > 0$ such that for all $0 \le s \le t \le 1$
              \begin{equation*}
                \|B^H_t - B^H_s\|_{L^q} = C|t - s|^H.
              \end{equation*}
        \item $\bE^s[f(B^H_t)] = P_{c^2(s, t)} f(\bE^s[B^H_t])$, for any measurable function $f: \bR^d \to \bR$ and $0 \le s \le t \le 1$.
        \item There exists a constant $C = C(q, d, H) > 0$ such that for all $0 \le s \le u \le t \le 1$ with $t - u \le u - s$ 
              \begin{equation}
                \|\bE^s[B^H_t] - \bE^s[B^H_u]\|_{L^q} \le C|t - u| |t - s|^{H - 1}.
              \end{equation}
    \end{enumerate}
\end{Lem}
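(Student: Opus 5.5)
The plan is to realize $B^H$ through a Volterra (moving-average) representation driven by a Brownian motion $W$ whose filtration generates the same $\sigma$-fields $\cF_s$. As in \citet{BuDaGe21}, take the Mandelbrot--Van Ness form
\[
B_t^H=\int_{-\infty}^t K(t,r)\rd W_r,\qquad K(t,r)=(t-r)_+^{H-1/2}-(-r)_+^{H-1/2},
\]
suitably normalized. Then for $s\le t$ we split
\[
B_t^H=\int_{-\infty}^s K(t,r)\rd W_r+\int_s^t (t-r)^{H-1/2}\rd W_r .
\]
The first term is $\cF_s$-measurable and equals $\bE^s[B_t^H]$. The second term is a centered Gaussian vector, independent of $\cF_s$, with covariance $\int_s^t (t-r)^{2H-1}\rd r\, I_d=(2H)^{-1}(t-s)^{2H}I_d=c^2(s,t)I_d$. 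All three assertions are then read off from this decomposition.

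For (1), stationarity of increments together with the covariance formula shows that $B_t^H-B_s^H\sim N(0,|t-s|^{2H}I_d)$. Hence $\|B^H_t-B^H_s\|_{L^q}=|t-s|^H\,\|Z\|_{L^q}$ with $Z\sim N(0,I_d)$, which gives $C=\|Z\|_{L^q}$. For (2), independence of the second term from $\cF_s$ and the freezing lemma give
\[
\bE^s[f(B_t^H)]=\bE\big[f(y+\sqrt{c^2(s,t)}Z)\big]\big|_{y=\bE^s[B_t^H]}=P_{c^2(s,t)}f(\bE^s[B_t^H]).
\]
Here $f$ is assumed measurable and such that the expectation makes sense, for instance nonnegative or of polynomial growth. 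The one point to check carefully is that the normalization of $K$ makes the conditional variance exactly $c^2(s,t)$. With the Mandelbrot--Van Ness kernel this is the computation above; for other normalizations the constant changes only by a fixed factor.

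For (3), write
\[
\bE^s[B_t^H]-\bE^s[B_u^H]=\int_{-\infty}^s\big((t-r)^{H-1/2}-(u-r)^{H-1/2}\big)\rd W_r .
\]
This is Gaussian, so by Gaussian hypercontractivity its $L^q$ norm is bounded by a constant times its $L^2$ norm. By the mean value theorem, for $r<s\le u\le t$,
\[
|(t-r)^{H-1/2}-(u-r)^{H-1/2}|\lesssim |t-u|\,(u-r)^{H-3/2}.
\]
Squaring and integrating,
\[
\int_{-\infty}^s (u-r)^{2H-3}\rd r\lesssim (u-s)^{2H-2}.
\]
The integral converges because $2H-3<-1$. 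Finally, the assumption $t-u\le u-s$ gives $u-s\ge (t-s)/2$, which yields the bound $C|t-u|\,|t-s|^{H-1}$.

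The main obstacle is the identification of the filtrations together with the exact variance constant in (2). One has to make sure that the completed natural filtration of $B^H$ coincides with that of $W$, so that conditioning on $\cF_s$ is the same as conditioning on $(W_r)_{r\le s}$. This invertibility of the Volterra representation is exactly what \citet[Proposition 3.6]{BuDaGe21} provides, and I would cite it rather than reprove it.
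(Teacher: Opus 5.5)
The paper gives no proof of this lemma; it imports it from \citet[Proposition 3.6]{BuDaGe21}, which rests on the same Mandelbrot--Van Ness splitting you use. Your computations (independence and variance $(2H)^{-1}(t-s)^{2H}$ of $\int_s^t(t-r)^{H-1/2}\rd W_r$, the mean value bound, and $\int_{-\infty}^s(u-r)^{2H-3}\rd r\lesssim(u-s)^{2H-2}$) are correct. The gap is your last paragraph.

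The completed natural filtration of $B^H$ on $[0,1]$ does \emph{not} coincide with $\mathcal{G}_s:=\sigma(W_r-W_{r'}:r,r'\le s)$, and \citet{BuDaGe21} prove no such identification. There, $\bE^s$ is conditioning with respect to a filtration for which the two-sided $W$ is a Brownian motion. Two facts show the identification fails:
\begin{itemize}
\item For $H=1/2$ one has $B^H_t=W_t-W_0$, which is independent of $(W_r)_{r\le0}$, so the two filtrations already differ.
\item For $H\neq1/2$, already at $s=0$ your ``first term'' $\int_{-\infty}^0K(t,r)\rd W_r$ is a non-degenerate Gaussian variable, while $\cF_0=\sigma(B^H_0)$ is trivial.
\end{itemize}
So the step ``the first term is $\cF_s$-measurable and equals $\bE^s[B^H_t]$'' fails. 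No citation can rescue it, because (ii) as an exact identity is itself false for the natural filtration. At $s=0$ you get $\bE^0[|B^H_t|^2]=\bE|B^H_t|^2=d\,t^{2H}$. With your unnormalized kernel this becomes $d\,C_H^2t^{2H}$, where $C_H^2=(2H)^{-1}+\int_0^\infty((1+v)^{H-1/2}-v^{H-1/2})^2\rd v$. By contrast, (ii) predicts $d\,c^2(0,t)=d\,t^{2H}/(2H)$, and these differ for every $H\neq1/2$.

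What your argument does prove is the lemma in the setting of \citet{BuDaGe21}: $B^H$ is the unnormalized Mandelbrot--Van Ness process and $\cF_t$ is the completion of $\mathcal{G}_t$ (or any filtration for which $W$ is a two-sided Brownian motion). You should state it that way rather than appeal to an ``invertibility'' that does not hold.

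You also need a consistent normalization. In (1) you use increments of variance $|t-s|^{2H}$, while in (2) you use the unnormalized kernel. If you normalize $K$ to match the covariance in the definition of $B^H$, the conditional variance becomes $c^2(s,t)/C_H^2$. That matters, since (ii) is an exact identity. With the unnormalized kernel, (1) simply holds with $C=C_H\|Z\|_{L^q}$.

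This reformulation is harmless downstream. $X$ and $X^n$ are adapted to $(\mathcal{G}_t)$, Lemma~\ref{Lem_Shifted_SSL} allows any complete filtration, and the heat-kernel estimates only need $c^2(s,t)$ to be proportional to $(t-s)^{2H}$. If you insist on the natural filtration, the representation whose filtration does coincide is a Volterra one on $[0,1]$ (Molchan--Golosov). But then the conditional variance $\int_s^tK_H(t,r)^2\rd r$ is only comparable to $(t-s)^{2H}$, and (ii) would have to be restated.
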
 

\subsection{Technical lemmas}

We collect the remaining estimates used in the proof of the main theorem.
We use the following heat-kernel estimates; similar estimates are used in \citet[Lemma 3.1, Lemma 5.1]{Bu25}.
\begin{Lem}\label{Lem_P_t}
    Let $f:\bR^d \to \bR$ be Lipschitz continuous.
    Then, there exists a constant $C = C(d) > 0$, such that for $0<s\le t\le1$, the following estimates hold:
    \begin{enumerate}
        \item $[P_t f]_{\mathrm{Lip}} \le C[f]_{\mathrm{Lip}}$.
        \item $[\nabla P_t f]_{\mathrm{Lip}} \le C[f]_{\mathrm{Lip}}t^{-1/2}$.
        \item $\|P_t f - P_s f\|_\infty \le C [f]_{\mathrm{Lip}} s^{-1/2} (t - s)$.
        \item $[P_t f-P_s f]_{\mathrm{Lip}} \le C[f]_{\mathrm{Lip}}s^{-1}(t-s) $.
    \end{enumerate}
\end{Lem}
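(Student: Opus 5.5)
The four estimates follow directly from the representation $P_tf=p_t*f$ and the elementary Gaussian identities
\[
\int_{\bR^d}|\nabla p_t(y)|\rd y\lesssim t^{-1/2},
\qquad
\int_{\bR^d}|\partial_t p_t(y)|\rd y\lesssim t^{-1},
\qquad
\int_{\bR^d}|\partial_t \nabla p_t(y)|\rd y\lesssim t^{-3/2},
\]
all of which come from scaling: $p_t(y)=t^{-d/2}p_1(y/\sqrt t)$. Throughout, derivatives will be placed on the kernel and differences on $f$, so that only $[f]_{\mathrm{Lip}}$ appears and never $\|f\|_\infty$. This matters because $f$ may be unbounded.

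\emph{Item 1.} Write $P_tf(x)-P_tf(x')=\int p_t(y)\{f(x-y)-f(x'-y)\}\rd y$. Since $p_t$ is a probability density, this gives the bound with $C=1$.

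\emph{Item 2.} By Rademacher, $f$ is differentiable a.e. with $|\nabla f|\le[f]_{\mathrm{Lip}}$, and $\nabla P_tf=p_t*\nabla f$. Moving one more derivative onto the kernel gives $\nabla^2P_tf=(\nabla p_t)*\nabla f$. Hence $\|\nabla^2P_tf\|_\infty\le [f]_{\mathrm{Lip}}\int|\nabla p_t|\lesssim[f]_{\mathrm{Lip}}t^{-1/2}$, which is the Lipschitz bound for $\nabla P_tf$. Alternatively, one can write $\nabla P_tf(x)-\nabla P_tf(x')=\int\nabla p_t(y)\{f(x-y)-f(x'-y)\}\rd y$ and avoid Rademacher altogether.

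\emph{Items 3 and 4.} Use the semigroup/heat equation $\partial_rP_rf=\tfrac12\Delta P_rf$, so that
\[
P_tf-P_sf=\int_s^t\partial_rP_rf\rd r .
\]
For item 3, write $\partial_rP_rf=\tfrac12\nabla\cdot(p_r*\nabla f)=\tfrac12\sum_i(\partial_ip_r)*\partial_if$. This gives $\|\partial_rP_rf\|_\infty\lesssim[f]_{\mathrm{Lip}}r^{-1/2}$. Integrating, $\int_s^tr^{-1/2}\rd r\le s^{-1/2}(t-s)$. For item 4, differentiate once more in $x$: $\nabla\partial_rP_rf=\tfrac12\sum_i(\nabla\partial_ip_r)*\partial_if$, and $\int|\nabla^2p_r|\lesssim r^{-1}$ by scaling. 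So $[\partial_rP_rf]_{\mathrm{Lip}}\lesssim[f]_{\mathrm{Lip}}r^{-1}$, and integrating over $[s,t]$ gives $\le s^{-1}(t-s)$.

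The only point needing care is justifying the differentiation under the integral and the identity $\partial_rP_rf=\frac12\Delta P_rf$ for unbounded Lipschitz $f$. This holds because $|f(x)|\lesssim1+|x|$ and all Gaussian kernel derivatives decay faster than any polynomial, so dominated convergence applies locally uniformly in $(r,x)$ for $r\ge s>0$. I do not expect any genuine obstacle; the constants depend only on $d$ through the $L^1$ norms of derivatives of $p_1$.
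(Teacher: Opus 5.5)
Your proof is correct and follows essentially the same route as the paper: derivatives go on the Gaussian kernel so that only $[f]_{\mathrm{Lip}}$ appears. You achieve this through differences of $f$ or Rademacher, where the paper invokes the cancellations $\int\nabla p_t=0$ etc., and you obtain items 3 and 4 by integrating $\partial_r P_r f=\frac12\Delta P_r f$ over $[s,t]$ with the bounds $r^{-1/2}$ and $r^{-1}$. The identities $\int|\partial_t p_t|\lesssim t^{-1}$ and $\int|\partial_t\nabla p_t|\lesssim t^{-3/2}$ that you list at the start are not actually used in your argument.
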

\begin{proof}
    The estimates in (i), (ii) and (iv) follow by differentiating the Gaussian kernel, using the cancellations
    \[
    \int_{\mathbb R^d}\nabla p_t(y)\rd y=0, 
    \qquad
    \int_{\mathbb R^d}\nabla^2 p_t(y)\rd y=0,
    \qquad
    \int_{\mathbb R^d}\nabla\partial_t p_t(y)\rd y=0,
    \]
    and applying the standard Gaussian bounds.
    For (iii) using $\partial_t P_t f=\frac12\Delta P_t f$ and (ii), we get
    \[
    |P_tf(x)-P_sf(x)|
    \lesssim
    \int_s^t[\nabla P_rf]_{\mathrm{Lip}}\,\rd r
    \lesssim [f]_{\mathrm{Lip}}s^{-1/2}(t-s).
    \]
\end{proof}

\noindent
The next estimate plays the same role as \citet[Lemma 5.20]{Bu25} in the present setting, where the drift is not assumed to be bounded.
\begin{Lem}\label{Lem_stochastic_regularity}
    Assume that $b :\bR^d \to \bR^d$ is Lipschitz continuous.
    For all $q \ge 2, \ n \in \mathbb{N}, \ (s, t) \in \Delta_{[0,1]}$, there exists a constant $C >0$ independent of $n, s, t$, such that
    \begin{equation*}
        \| \bE^s [|\varphi_{\kappa_n(t)}^n - \bE^s [\varphi_{\kappa_n(t)}^n]|] \|_{L^q} \le C (t - s)^{1 + H}.
    \end{equation*}
\end{Lem}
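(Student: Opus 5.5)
The plan is to use the elementary inequality $\bE^s[|Z-\bE^s Z|]\le 2\,\bE^s[|Z-A|]$, valid for any $\cF_s$-measurable integrable $A$. It follows from $|Z-\bE^sZ|\le |Z-A|+|\bE^s[A-Z]|$ and conditional Jensen. We then pick an $\cF_s$-measurable approximation $A$ of $\varphi^n_{\kappa_n(t)}$, built from the Lipschitz property of $b$ and the conditional Gaussian structure of Lemma~\ref{Lem_fBM}.

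\emph{Trivial case and reduction.} If $\kappa_n(t)\le s$, then $\varphi^n_{\kappa_n(t)}$ is $\cF_s$-measurable, because $\varphi^n_u=x_0+\int_0^u b(X^n_{\kappa_n(r)})\rd r$ is adapted. The left-hand side then vanishes. Otherwise $s<\kappa_n(t)\le t$, and
\[
\varphi^n_{\kappa_n(t)}=\varphi^n_s+Z,\qquad Z:=\int_s^{\kappa_n(t)} b(X^n_{\kappa_n(r)})\rd r .
\]
Since $\varphi^n_s$ is $\cF_s$-measurable, it suffices to bound $\bE^s[|Z-A|]$ for the choice
\[
A:=\int_s^{\kappa_n(t)} b\big(Y_r\big)\rd r,
\]
where $Y_r:=X^n_{\kappa_n(r)}$ if $\kappa_n(r)\le s$, and $Y_r:=\varphi^n_s+\bE^s[B^H_{\kappa_n(r)}]$ if $\kappa_n(r)>s$. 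Each $Y_r$ is $\cF_s$-measurable, hence so is $A$.

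\emph{Pointwise bound.} For $r$ with $\kappa_n(r)\le s$ the integrand difference vanishes. For $\kappa_n(r)>s$, the Lipschitz property gives
\[
|b(X^n_{\kappa_n(r)})-b(Y_r)|\le [b]_{\mathrm{Lip}}\Big(\int_s^{\kappa_n(r)}|b(X^n_{\kappa_n(u)})|\rd u+\big|B^H_{\kappa_n(r)}-\bE^s[B^H_{\kappa_n(r)}]\big|\Big).
\]
Taking $\bE^s$ handles each piece separately.
\begin{itemize}
\item For the Gaussian piece, apply Lemma~\ref{Lem_fBM}(ii) with $f(x)=|x-\bE^s[B^H_{\kappa_n(r)}]|$, after freezing the $\cF_s$-measurable centre. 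This yields the deterministic bound $\bE^s\big|B^H_{\kappa_n(r)}-\bE^s B^H_{\kappa_n(r)}\big|\lesssim c(s,\kappa_n(r))\lesssim (r-s)^H$.
\item For the drift piece, use $|b(x)|\lesssim 1+|x|$. This bounds it by $(r-s)\,\bE^s\big[1+\sup_{u\le1}|X^n_u|\big]$.
\end{itemize}
Integrating over $r\in[s,t]$ gives
\[
\bE^s[|Z-A|]\lesssim (t-s)^{1+H}+(t-s)^2\,\bE^s\Big[1+\sup_{u\le 1}|X^n_u|\Big].
\]

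\emph{Moment bound and conclusion.} It remains to take $L^q$ norms; conditional expectation is an $L^q$ contraction. So we need the uniform moment bound $\sup_n\|\sup_{u\le1}|X^n_u|\|_{L^q}<\infty$. This follows from the linear growth of $b$ and Gronwall's inequality applied pathwise:
\[
\sup_{u\le t}|X^n_u|\le |x_0|+C\int_0^t\Big(1+\sup_{v\le r}|X^n_v|\Big)\rd r+\sup_{u\le1}|B^H_u|,
\]
together with the fact that $\sup_{u\le1}|B^H_u|$ has finite moments of all orders, e.g.\ by Fernique's theorem or Kolmogorov's criterion combined with Lemma~\ref{Lem_fBM}(i). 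Since $(t-s)^2\le (t-s)^{1+H}$ on $[0,1]$, this yields the claim. The only genuinely delicate point is the Gaussian step: we need the conditional variance to be deterministic, which Lemma~\ref{Lem_fBM}(ii) provides. That is what makes the bound $(r-s)^H$ hold almost surely rather than merely in $L^q$.
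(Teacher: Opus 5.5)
Your proof is correct. It shares the paper's skeleton: the inequality $\bE^s|Y-\bE^sY|\le 2\bE^s|Y-Z|$ with an $\cF_s$-measurable $Z$, the trivial case $\kappa_n(t)\le s$, and a Lipschitz bound integrated against an increment of order $(r-s)^H$.

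\textbf{Where you differ from the paper.} The difference is the choice of approximant and how the resulting error is controlled.
\begin{itemize}
\item The paper takes $Z=\varphi^n_{s'}+(\kappa_n(t)-s')\,b(X^n_s)$ with $s'=\lceil ns\rceil/n$. This reduces everything to $\int_{s'}^{\kappa_n(t)}\|X^n_{\kappa_n(r)}-X^n_s\|_{L^q}\rd r$ and the $L^q$-H\"older bound $\|X^n_v-X^n_u\|_{L^q}\lesssim (v-u)^H$.
\item You keep $\bE^s[B^H_{\kappa_n(r)}]$ inside $b$ and split the error into a drift increment and the conditional fluctuation of the noise.
\end{itemize}

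\textbf{The Gaussian step is not needed.} It is also not the ``genuinely delicate point'' you describe. You take $L^q$ norms at the end, and $\|\bE^s W\|_{L^q}\le\|W\|_{L^q}$, so an almost-sure bound buys nothing. The estimate $\|B^H_{\kappa_n(r)}-\bE^s B^H_{\kappa_n(r)}\|_{L^q}\le 2\|B^H_{\kappa_n(r)}-B^H_s\|_{L^q}\lesssim (r-s)^H$ from Lemma~\ref{Lem_fBM}(i) suffices. Using it also avoids applying Lemma~\ref{Lem_fBM}(ii) to an $\omega$-dependent function $f$. That goes slightly beyond the lemma as stated, which is for deterministic $f$, and needs a standard conditional-law/monotone-class extension. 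In fact you could simply freeze the whole argument at $X^n_s$, as the paper does.

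\textbf{What your route gains.} You prove the uniform-in-$n$ moment bound $\sup_n\|\sup_u|X^n_u|\|_{L^q}<\infty$ explicitly via Gronwall. The paper's opening claim $\|X^n_v-X^n_u\|_{L^q}\lesssim (v-u)^H$ relies on this bound without stating or proving it. Since $b$ is only of linear growth, the drift increment $\int_u^v b(X^n_{\kappa_n(r)})\rd r$ is $O(v-u)$ in $L^q$ only given such a moment bound.
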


\begin{proof}
    Note that it holds that $\|X^n_v - X^n_u\|_{L^q} \lesssim (v - u)^H$ for all $0 \le u \le v \le 1$.
    Let $s^{\prime} \coloneqq \lceil n s \rceil / n $, the smallest grid point larger or equal to $s$.
    In the case of $\kappa_n(t) \le s$ the LHS is zero.
    For $\kappa_n(t) > s$, we use the elementary inequality
    \[
    \bE^s[|Y-\bE^s[Y]|]\le 2\bE^s[|Y-Z|],
    \]
    valid for every $\cF_s$-measurable random variable $Z$. Thus
    \begin{align*}
        \| \bE^s [|\varphi_{\kappa_n(t)}^n - \bE^s [\varphi_{\kappa_n(t)}^n]|] \|_{L^q} &\le 2 \| \bE^s [|\varphi_{\kappa_n(t)}^n - (\varphi_{s^{\prime}}^n + (\kappa_n(t) - s^{\prime})b(X_{s}^n))|] \|_{L^q} \\
        &\le 2 \Big\| \bE^s \Big[\Big| \int_{s^{\prime}}^{\kappa_n(t)} (b(X_{\kappa_n(r)}^n) - b(X_{s}^n)) \rd r \Big|\Big] \Big\|_{L^q} \\
        &\le 2 [b]_{\mathrm{Lip}} \int_{s^{\prime}}^{\kappa_n(t)} \| X_{\kappa_n(r)}^n - X_{s}^n \|_{L^q} \rd r \\
        &\lesssim [b]_{\mathrm{Lip}} \int_{s^{\prime}}^{\kappa_n(t)} (\kappa_n(r) - s)^{H} \rd r \\
        &\lesssim [b]_{\mathrm{Lip}} (t - s)^{1 + H}.
    \end{align*}
\end{proof}

\noindent
We use the following shifted stochastic sewing lemma from \citet[Lemma 3.1]{BuDaGe25} and \citet[Lemma 5.14]{Bu25}.
\begin{Lem}\label{Lem_Shifted_SSL}
    Let $q \ge 2$. Suppose that there exist measurable functions $\mathcal{A} \colon \Omega \times [0,1] \to \mathbb{R}^d$, $A \colon \Omega \times \Delta_{[0,1]}^{\mathrm{shift}} \to \mathbb{R}^d$ and a complete filtration $(\mathcal{F}_t)_{t \in [0,1]}$ such that the following holds:
    \begin{enumerate}
        \item There exists a process $\cA = \{\cA_t : t \in [0, 1]\}$ such that for any $(s, t) \in \Delta_{[0, 1]}$
        \begin{equation}
        \mathcal{A}_t - \mathcal{A}_s = \lim_{m \to \infty} \sum_{i=1}^{m-1} A_{s+i\frac{t - s}{m}, s+(i+1)\frac{t - s}{m}} \text{ in probability.} \label{cond_shiftSSL_0}
        \end{equation}
        
        \item for any $(s,t) \in \Delta_{[0,1]}^{\mathrm{shift}}$, the random variable $A_{s,t}$ is $\mathcal{F}_t$-measurable;
        
        \item there exist $\Gamma_1, \Gamma_2, \varepsilon_1, \varepsilon_2 > 0$ such that for every $(s,t) \in \Delta_{[0,1]}^{\mathrm{shift}}$ and $u = (s+t)/2$ we have
        \begin{align}
        \| A_{s,t} \|_{L^q} &\le \Gamma_1 |t - s|^{1/2 + \varepsilon_1}, \label{cond_shiftSSL_1}\\
        \| \mathbb{E}^{s-(t - s)} \delta A_{s,u,t} \|_{L^q} &\le \Gamma_2 |t - s|^{1 + \varepsilon_2}. \label{cond_shiftSSL_2}
        \end{align}
    \end{enumerate}
    Then there exist constants $K_1, K_2 > 0$, which depend only on $\varepsilon_1, \varepsilon_2, q$ and $d$ such that for any $0 \le s \le t \le 1$ we have
    \begin{equation}\label{ineq_SSL}
    \| \mathcal{A}_t - \mathcal{A}_s \|_{L^q} \le K_1 \Gamma_1 |t - s|^{1/2 + \varepsilon_1} + K_2 \Gamma_2 |t - s|^{1 + \varepsilon_2}.
    \end{equation}
\end{Lem}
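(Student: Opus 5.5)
The plan is the standard dyadic sewing argument: write $\cA_t-\cA_s$ as a telescoping sum over dyadic refinements, and control each refinement step with a martingale-difference (BDG) bound plus the shifted conditional bound \eqref{cond_shiftSSL_2}.

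\textbf{Setup.} Fix $(s,t)\in\Delta_{[0,1]}$, and for $k\ge1$ put $h_k:=2^{-k}(t-s)$ and $u^k_i:=s+ih_k$. Define
\[
A^k:=\sum_{i=1}^{2^k-1}A_{u^k_i,u^k_{i+1}}.
\]
The first subinterval $[s,s+h_k]$ is deliberately omitted, exactly as in \eqref{cond_shiftSSL_0}. The reason is that for $i\ge1$ we have $u^k_i-h_k\ge s\ge0$, so every pair $(u^k_i,u^k_{i+1})$ lies in $\Delta^{\mathrm{shift}}_{[0,1]}$ and $A$ is defined on it. By \eqref{cond_shiftSSL_0} along $m=2^k$, $A^k\to\cA_t-\cA_s$ in probability. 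By Fatou's lemma it therefore suffices to bound $\|A^1\|_{L^q}+\sum_{k\ge1}\|A^{k+1}-A^k\|_{L^q}$ by the right-hand side of \eqref{ineq_SSL}.

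\textbf{Base term and one refinement step.} The base term is $A^1=A_{s+h_1,t}$, so \eqref{cond_shiftSSL_1} gives $\|A^1\|_{L^q}\le\Gamma_1|t-s|^{1/2+\varepsilon_1}$.

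For the refinement, each interval $[u^k_i,u^k_{i+1}]$ with $i\ge1$ splits into two level-$(k+1)$ intervals with indices $2i,2i+1\ge2$. The level-$(k+1)$ interval of index $1$, namely $[s+h_{k+1},s+h_k]$, is new. Writing $m_i$ for the midpoint of $[u^k_i,u^k_{i+1}]$, this gives
\[
A^{k+1}-A^k=A_{s+h_{k+1},\,s+h_k}-\sum_{i=1}^{2^k-1}\delta A_{u^k_i,m_i,u^k_{i+1}} .
\]
The extra term satisfies $\|A_{s+h_{k+1},s+h_k}\|_{L^q}\le\Gamma_1h_{k+1}^{1/2+\varepsilon_1}$, which is summable in $k$.

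\textbf{Splitting the sum.} Write $R_i:=\delta A_{u^k_i,m_i,u^k_{i+1}}$ and decompose
\[
R_i=\bE^{u^k_i-h_k}R_i+\big(R_i-\bE^{u^k_i-h_k}R_i\big).
\]
This conditioning is legitimate since $u^k_i-h_k=u^k_{i-1}\ge s$.

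\emph{Conditional part.} By \eqref{cond_shiftSSL_2} and the triangle inequality,
\[
\Big\|\sum_i\bE^{u^k_i-h_k}R_i\Big\|_{L^q}\le2^k\Gamma_2h_k^{1+\varepsilon_2}=\Gamma_2 2^{-k\varepsilon_2}|t-s|^{1+\varepsilon_2}.
\]

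\emph{Fluctuation part.} Set $Z_i:=R_i-\bE^{u^k_{i-1}}R_i$. Then $Z_i$ is $\cF_{u^k_{i+1}}$-measurable, by (ii), and satisfies $\bE^{u^k_{i-1}}Z_i=0$. Split the indices $i$ by parity. Along even $i$ (and separately odd $i$), the $Z_i$ form a martingale difference sequence with respect to the filtration $(\cF_{u^k_{i+1}})$, since $u^k_{i+1}\le u^k_{(i+2)-1}$. By the BDG inequality together with Minkowski's inequality in $L^{q/2}$ (here $q\ge2$ is used),
\[
\Big\|\sum_{i\ \mathrm{even}}Z_i\Big\|_{L^q}\lesssim_q\Big(\sum_i\|Z_i\|_{L^q}^2\Big)^{1/2}.
\]
Since $\|Z_i\|_{L^q}\le2\|R_i\|_{L^q}\le6\Gamma_1h_k^{1/2+\varepsilon_1}$ by \eqref{cond_shiftSSL_1}, this is at most
\[
C\,2^{k/2}\Gamma_1h_k^{1/2+\varepsilon_1}=C\Gamma_12^{-k\varepsilon_1}|t-s|^{1/2+\varepsilon_1}.
\]

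\textbf{Conclusion.} Summing the geometric series over $k$ yields \eqref{ineq_SSL}, with $K_1,K_2$ depending only on $\varepsilon_1,\varepsilon_2,q$ (and $d$ through the vector-valued BDG constant).

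The main obstacle is the bookkeeping for the shifted domain. One has to check that every conditioning time $u^k_i-h_k$ stays $\ge s$; this is why the first subinterval is dropped and the new term $A_{s+h_{k+1},s+h_k}$ appears at each level. One also has to check that the lag of two grid steps in the conditioning still produces a martingale structure, which is handled by the parity splitting.
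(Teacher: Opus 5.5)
Your argument is correct. Dropping the first dyadic subinterval keeps every pair and half-pair in $\Delta^{\mathrm{shift}}_{[0,1]}$. The conditional part is controlled by the shifted bound in (iii) at the lagged time $u^k_{i-1}$, and the even/odd splitting restores the martingale-difference structure needed for BDG, so the increments decay geometrically in $k$.

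The paper gives no proof of its own here; it imports the lemma from \citet[Lemma 3.1]{BuDaGe25} and \citet[Lemma 5.14]{Bu25}. As far as I recall the shifted sewing lemma in the literature, your proof is essentially the standard argument for it.
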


\noindent
We also use the following local-to-global estimate from \citet[Lemma 4.8]{Bu25}.

\begin{Lem}\label{Lem_4.8}
    Let $q \geq 1$, $\tau \in (0,1]$, $\Gamma, \ell \geq 0$ and $M > 0$. Let $f$ be a measurable function $\Omega \times [0,1] \to \mathbb{R}^d$.
    Suppose that $f_0 = 0$ and for every $(s,t) \in \Delta_{[0,1]}$ with $t - s \leq \ell$, we have
    \begin{equation}\label{ineq_lem_4.8_1}
        [f]_{C^\tau L^q([s,t])} \leq M \|f_s\|_{L^q} + \Gamma. 
    \end{equation}
    Then there exists a constant $C = C(\tau, \ell, M) > 0$ independent of $\Gamma$ such that
    \begin{equation}\label{ineq_lem_4.8_2}
        [f]_{C^\tau L^q([0,1])} \leq C\Gamma.
    \end{equation}
\end{Lem}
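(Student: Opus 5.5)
The plan is a discrete Gronwall argument on a uniform partition of $[0,1]$ into intervals of length at most $\ell$, followed by chaining the local Hölder bounds across subintervals. I assume $\ell>0$; otherwise \eqref{ineq_lem_4.8_1} carries no information and the statement is vacuous.

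\textbf{Step 1: set up the partition.} Set $N:=\lceil 1/\ell\rceil$, $h:=1/N\le\ell$ and $t_k:=kh$ for $k=0,\dots,N$. Write $a_k:=\|f_{t_k}\|_{L^q}$, so that $a_0=0$ because $f_0=0$.

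\textbf{Step 2: bound the grid values.} Applying \eqref{ineq_lem_4.8_1} on $[t_{k-1},t_k]$ with $u=t_{k-1}$ and $v=t_k$ gives
\[
a_k\le a_{k-1}+h^\tau\bigl(M a_{k-1}+\Gamma\bigr)\le (1+M)a_{k-1}+\Gamma .
\]
Here we used $h\le 1$. This recursion also shows inductively that each $a_k$ is finite. Iterating from $a_0=0$ yields
\[
a_k\le \Gamma\sum_{j=0}^{k-1}(1+M)^j\le N(1+M)^N\Gamma=:C_0\Gamma
\qquad\text{for all }k\le N.
\]
The constant $C_0$ depends only on $\ell$ and $M$.

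\textbf{Step 3: local bound on each subinterval.} By \eqref{ineq_lem_4.8_1} on $[t_{k-1},t_k]$ together with Step 2,
\[
[f]_{C^\tau L^q([t_{k-1},t_k])}\le M C_0\Gamma+\Gamma=:C_1\Gamma .
\]

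\textbf{Step 4: globalize by chaining.} Take $0\le u<v\le 1$.
\begin{itemize}
\item If $u$ and $v$ lie in a common subinterval $[t_{k-1},t_k]$, Step 3 gives the bound directly.
\item Otherwise, let $t_i$ be the smallest grid point $\ge u$ and $t_j$ the largest grid point $\le v$. Split
\[
f_v-f_u=(f_{t_i}-f_u)+\sum_{m=i+1}^{j}(f_{t_m}-f_{t_{m-1}})+(f_v-f_{t_j}).
\]
This sum has at most $N+2$ increments. Each one is taken inside a single subinterval, so its $L^q$-norm is at most $C_1\Gamma\,|\text{length}|^\tau\le C_1\Gamma|v-u|^\tau$.
\end{itemize}
The triangle inequality then gives $\|f_v-f_u\|_{L^q}\le (N+2)C_1\Gamma|v-u|^\tau$. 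This is \eqref{ineq_lem_4.8_2} with $C=(N+2)C_1$, which depends only on $\tau$, $\ell$ and $M$ and not on $\Gamma$.

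\textbf{Main obstacle.} The only point needing care is Step 2. The local estimate \eqref{ineq_lem_4.8_1} involves the unknown quantity $\|f_s\|_{L^q}$, so the grid values must be controlled recursively starting from $f_0=0$ before any Hölder bound can be used. The constant this produces grows exponentially in $1/\ell$, which is acceptable since $\ell$ and $M$ are fixed.
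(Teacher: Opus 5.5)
Your proof is correct. The recursion $a_k\le(1+M)a_{k-1}+\Gamma$ from $a_0=0$ bounds the grid values, and chaining at most $N+2$ single-subinterval increments, each bounded by $C_1\Gamma|v-u|^\tau$, then gives the claim with a constant depending only on $\ell$ and $M$.

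The paper does not prove this lemma; it imports it from \citet[Lemma 4.8]{Bu25}. Your grid-induction-then-concatenation argument is the standard proof of such local-to-global statements.

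One small correction: for $\ell=0$ the statement is not vacuous but false. The hypothesis is then empty, while the conclusion fails for any deterministic $f\not\equiv 0$ with $f_0=0$ and $\Gamma=0$. So $\ell>0$ is a hypothesis missing from the lemma as stated, and you are right to impose it.
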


\section{Noise discretization estimate}
The proof is inspired by the arguments of \citet[Lemma 4.1]{BuDaGe21}, \citet[Lemma 4.7]{BuDaGe25}, and \citet[Lemma 4.2]{SoLiWa25}.
\begin{Lem}\label{Lem_Lipschitz}
    Let $p \ge 2$, $n \in \bN$, and $(s, t) \in \Delta_{[0, 1]}$.
    Suppose that $b$ is Lipschitz continuous.
    If $H \in (1/2,1)$, then there exists a constant $C >0 $ independent of $n$, $s$, and $t$ such that
    \begin{equation}
        \Big\| \int_s^t \{ b(B^H_r + \varphi_{\kappa_n(r)}^n) - b(B^H_{\kappa_n(r)} + \varphi_{\kappa_n(r)}^n) \} \rd r \Big\|_{L^p} \le C n^{-1} (t - s)^{H}.
    \end{equation}
    If $H \in (0,1/2]$, then for any $\varepsilon \in (0, 1/2)$, there exists a constant $C >0 $ independent of $n$, $s$, and $t$ such that
    \begin{equation}
        \Big\| \int_s^t \{ b(B^H_r + \varphi_{\kappa_n(r)}^n) - b(B^H_{\kappa_n(r)} + \varphi_{\kappa_n(r)}^n) \} \rd r \Big\|_{L^p} \le C n^{-(1/2 + H) + \varepsilon} (t - s)^{1/2 + \varepsilon}.
    \end{equation}
\end{Lem}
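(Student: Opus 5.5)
The plan is to apply the shifted stochastic sewing lemma (Lemma \ref{Lem_Shifted_SSL}) to the germ
\[
A_{s,t}:=\bE^{s_-}\Big[\int_s^t \big\{ b(B^H_r+\psi_{s,t})-b(B^H_{\kappa_n(r)}+\psi_{s,t})\big\}\rd r\Big],
\qquad s_-:=s-(t-s),\quad \psi_{s,t}:=\bE^{s_-}[\varphi^n_{\kappa_n(t)}],
\]
with $\cA_t:=\int_0^t \{b(B^H_r+\varphi^n_{\kappa_n(r)})-b(B^H_{\kappa_n(r)}+\varphi^n_{\kappa_n(r)})\}\rd r$. Before that, I dispose of short intervals.

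\textbf{Short intervals, $t-s\le 1/n$.} Since $b$ is Lipschitz and $\|B^H_r-B^H_{\kappa_n(r)}\|_{L^p}\lesssim n^{-H}$ by Lemma \ref{Lem_fBM}(i), the left-hand side is at most $\lesssim (t-s)n^{-H}$. This is dominated by $n^{-1}(t-s)^H$ when $H>1/2$, and by $n^{-(1/2+H)+\varepsilon}(t-s)^{1/2+\varepsilon}$ when $H\le 1/2$. The same trivial bound $\|A_{s,t}\|_{L^p}\lesssim (t-s)n^{-H}$ also holds for the germ at every scale.

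\textbf{Long intervals, $t-s>1/n$: the bound on $A_{s,t}$.} For $r\ge s$ we have $\kappa_n(r)\ge s-1/n> s_-$. By Lemma \ref{Lem_fBM}(ii),
\[
\bE^{s_-}[b(B^H_r+\psi)]=P_{c^2(s_-,r)}b\big(\bE^{s_-}B^H_r+\psi\big),
\]
and similarly at $\kappa_n(r)$. I split the difference into two pieces.
\begin{itemize}
\item The change of the semigroup time: Lemma \ref{Lem_P_t}(iii) gives $(t-s)^{-H}\cdot|c^2(s_-,r)-c^2(s_-,\kappa_n(r))|\lesssim (t-s)^{-H}n^{-1}(t-s)^{2H-1}$.
\item The change of the conditional mean: Lemma \ref{Lem_P_t}(i) and Lemma \ref{Lem_fBM}(iii) give $\lesssim n^{-1}(t-s)^{H-1}$.
\end{itemize}
Integrating over $[s,t]$ yields $\|A_{s,t}\|_{L^p}\lesssim n^{-1}(t-s)^H$.

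For $H>1/2$ this is \eqref{cond_shiftSSL_1} with $\Gamma_1\sim n^{-1}$ and $\varepsilon_1=H-1/2$. For $H\le 1/2$, I interpolate between this bound and the trivial one:
\[
\min\{n^{-1}(t-s)^H,\ n^{-H}(t-s)\}\le \big(n^{-1}(t-s)^H\big)^{\theta}\big(n^{-H}(t-s)\big)^{1-\theta},
\]
choosing $\theta\in(0,1)$ so that the power of $(t-s)$ is $1/2+\varepsilon$. Then the power of $n$ is $-(1/2+H)+\varepsilon$ up to adjusting $\varepsilon$, which gives $\Gamma_1\sim n^{-(1/2+H)+\varepsilon}$ and $\varepsilon_1=\varepsilon$.

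\textbf{The condition \eqref{cond_shiftSSL_2} on $\delta A$.} This is the main obstacle. In $\delta A_{s,u,t}$ the germs $A_{s,u}$ and $A_{u,t}$ use different conditioning times $s-(u-s)$ and $u-(t-u)$, both $\ge s_-$. Conditioning them further on $\cF_{s_-}$ and using the tower property, each becomes $\bE^{s_-}$ of the same heat-semigroup expression, but with a frozen point $\psi_{s,u}$ or $\psi_{u,t}$ instead of $\psi_{s,t}$. Thus $\bE^{s_-}\delta A_{s,u,t}$ reduces to differences of the form
\[
\bE^{s_-}\Big[G_r(\psi_{s,t})-G_r(\psi')\Big],\qquad G_r(x):=P_{c^2}b(\mu_r+x)-P_{c^2}b(\mu_{\kappa_n(r)}+x),
\]
for $\psi'\in\{\psi_{s,u},\psi_{u,t}\}$. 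Here I use two ingredients.
\begin{itemize}
\item The Lipschitz constant of $G_r$: via Lemma \ref{Lem_P_t}(ii) and (iv), together with Lemma \ref{Lem_fBM}(iii), it is $\lesssim n^{-1}(t-s)^{-1}$.
\item The size of $|\psi_{s,t}-\psi'|$: it is controlled by $\bE^{s_-}|\varphi^n_{\kappa_n(\cdot)}-\bE^{s_-}\varphi^n_{\kappa_n(\cdot)}|$, plus the increment of $\varphi^n$ over $[\kappa_n(u),\kappa_n(t)]$ treated the same way. By Lemma \ref{Lem_stochastic_regularity}, this is $\lesssim (t-s)^{1+H}$ in $L^p$.
\end{itemize}
Integrating over $r\in[s,t]$ gives $\|\bE^{s_-}\delta A_{s,u,t}\|_{L^p}\lesssim n^{-1}(t-s)^{1+H}$, i.e.\ \eqref{cond_shiftSSL_2} with $\varepsilon_2=H$. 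For $H\le1/2$ I again interpolate with a trivial bound, to get $\Gamma_2\sim n^{-(1/2+H)+\varepsilon}$. The trivial bound is $(t-s)^{2+H}$ from Lipschitz-in-$\psi$ only, or $(t-s)n^{-H}$. The delicate point I expect here is tracking exactly which kernel estimate produces the factor $n^{-1}$ without losing a power of $(t-s)$.

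\textbf{Remaining conditions and conclusion.} Condition (ii) of Lemma \ref{Lem_Shifted_SSL} is immediate, since $A_{s,t}$ is $\cF_{s_-}\subset\cF_t$-measurable. Condition (i) follows by a standard argument: Riemann sums of $A$ over a refining partition converge in $L^2$ to $\cA_t-\cA_s$, because $\|\cA$-increment$-A_{s,t}\|$ is $o(t-s)$ by continuity of $\varphi^n$ and $B^H$ and Lipschitzness of $b$. Lemma \ref{Lem_Shifted_SSL} then yields the claimed bound for $t-s>1/n$, since both terms in \eqref{ineq_SSL} are dominated by the target right-hand sides. Combined with the short-interval case, this proves the lemma.
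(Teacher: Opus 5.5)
\textbf{Gap in the verification of \eqref{cond_shiftSSL_2}.} The problem comes from your choice of germ. You freeze one point $\psi_{s,t}=\bE^{s_-}[\varphi^n_{\kappa_n(t)}]$ for the whole interval. So on $[s,u]$,
\[
\psi_{s,t}-\psi_{s,u}=\bE^{s_-}\Big[\int_{\kappa_n(u)}^{\kappa_n(t)} b(X^n_{\kappa_n(a)})\rd a\Big]+\big(\bE^{s_-}-\bE^{s-(u-s)}\big)[\varphi^n_{\kappa_n(u)}].
\]
Only the second term is a fluctuation around a conditional mean, which Lemma \ref{Lem_stochastic_regularity} controls. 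The first term is the conditional mean of a drift increment. It is in general of order $t-s$, not $(t-s)^{1+H}$, and "treated the same way" does not apply to it.

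Multiply it by your Lipschitz bound $n^{-1}(t-s)^{-1}$ for $G_r$ and integrate over $[s,u]$. Your argument then yields only $\|\bE^{s_-}\delta A_{s,u,t}\|_{L^p}\lesssim n^{-1}(t-s)$, i.e.\ $\varepsilon_2=0$, which Lemma \ref{Lem_Shifted_SSL} does not permit. For $H\le 1/2$ you could still recover the stated rate by interpolating $n^{-1}(t-s)$ with the honest trivial bound $(t-s)^2$. For $H>1/2$ this route cannot give the sharp factor $n^{-1}$.

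The missing idea is to freeze pointwise in $r$, as the paper does: use $\bE^{s_-}[\varphi^n_{\kappa_n(r)}]$ inside the integrand. Then $A_{s,t}$, $A_{s,u}$ and $A_{u,t}$ involve the same $\psi_r=\varphi^n_{\kappa_n(r)}$ at each $r$ and differ only in the conditioning time. So $\delta A$ sees only $\bE^{s-(t-s)}[\psi_r]-\bE^{s_j}[\psi_r]$ with $s_j\in\{s-(u-s),\,s\}$. Lemma \ref{Lem_stochastic_regularity} bounds this by $(r-s_-)^{1+H}$. Combined with your Lipschitz bound on $G_r$, which matches the paper's treatment, this gives $n^{-1}(t-s)^{1+H}$ and hence $\varepsilon_2=H$.

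\textbf{A smaller issue in the bound on $A_{s,t}$.} You assume only $t-s>1/n$. Then $\kappa_n(r)-s_-\ge (t-s)-1/n$ can be arbitrarily small for $r$ near $s$. So neither the factor $(t-s)^{-H}$ from the heat-kernel estimates nor the hypothesis $r-\kappa_n(r)\le \kappa_n(r)-s_-$ of Lemma \ref{Lem_fBM}(iii) is justified there.

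This is easily repaired. Either take the threshold $t-s\ge 2/n$, so that $\kappa_n(r)-s_-\ge (t-s)/2\ge 1/n$, or split off $[s,\kappa_n(s)+2/n]$ as the paper does. Your trivial bound $(t-s)n^{-H}$ covers the remaining range. The same separation is needed in the $\delta A$ estimate; the paper uses $t-s\ge 4/n$ there.

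With these fixes, your interpolation for $\Gamma_1$ when $H\le 1/2$ is a valid alternative to the paper's observation that $n^{-1}\lesssim t-s$ on the relevant range.
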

\begin{proof}
    Set
    \[
    R_{H,\varepsilon}^n(s,t):=
    \begin{cases}
    n^{-1}(t-s)^H, & H\in(1/2,1),\\
    n^{-(1/2+H)+\varepsilon}(t-s)^{1/2+\varepsilon}, & H\in(0,1/2].
    \end{cases}
    \]
    We prove the claim by applying the shifted stochastic sewing lemma (Lemma \ref{Lem_Shifted_SSL}).
    For $t \in [0,1]$, set
    \[
    \mathcal{A}_t \coloneqq \int_0^t \big\{ b(B^H_r + \varphi_{\kappa_n(r)}^n) - b(B^H_{\kappa_n(r)} + \varphi_{\kappa_n(r)}^n) \big\} \rd r.
    \]
    For $s,t \in \Delta_{[0,1]}^{\text{shift}}$, define
    \[
    A_{s,t} \coloneqq \bE^{s_1} \Big[ \int_s^t \big\{ b(B^H_r + \bE^{s_1}[\psi_r]) - b(B^H_{\kappa_n(r)} + \bE^{s_1}[\psi_r]) \big\} \rd r \Big],
    \]
    where $s_1 := s - (t - s)$ and $\psi_r \coloneqq \varphi_{\kappa_n(r)}^n$. 

    We first verify the estimate corresponding to \eqref{cond_shiftSSL_1} in Lemma \ref{Lem_Shifted_SSL}.
    Put $s^{\prime} \coloneqq \kappa_n(s) + 2/n$.
    \begin{equation}
        \| A_{s,t} \|_{L^p} \le \Big\| \int_s^{s^{\prime} \wedge t} \dots \Big\|_{L^p} + \Big\| \int_{s^{\prime} \wedge t}^t \dots \Big\|_{L^p} \eqqcolon I_1 + I_2.
    \end{equation}
    Since $|s^{\prime} - s| \le 2/n$, (i) in Lemma \ref{Lem_fBM} gives
    \[
    I_1
    \lesssim [b]_{\mathrm{Lip}}n^{-H}(s'\wedge t-s)
    \lesssim [b]_{\mathrm{Lip}} R_{H,\varepsilon}^n(s,t).
    \]
    If $t - s^\prime > 1/n$, then by (ii) in Lemma \ref{Lem_fBM} and (i) and (iii) in Lemma \ref{Lem_P_t},
    \[
    \begin{aligned}
        I_2 &= \Big\| \int_{s^{\prime}}^t \big\{ P_{c^2(s_1, r)} b(\bE^{s_1}[B^H_{r}] + \bE^{s_1}[\psi_r]) - P_{c^2(s_1, r)} b(\bE^{s_1}[B^H_{\kappa_n(r)}] + \bE^{s_1}[\psi_r])\\
        & \qquad + P_{c^2(s_1, r)} b(\bE^{s_1}[B^H_{\kappa_n(r)}] + \bE^{s_1}[\psi_r]) - P_{c^2(s_1, \kappa_n(r))} b(\bE^{s_1}[B^H_{\kappa_n(r)}] + \bE^{s_1}[\psi_r]) \big\} \rd r \Big\|_{L^p} \\
        &\le \int_{s^{\prime}}^t \big\{ [P_{c^2(s_1, r)} b]_{\mathrm{Lip}} \|\bE^{s_1}[B^H_{r} - B^H_{\kappa_n(r)}]\|_{L^p} + \|P_{c^2(s_1, r)} b - P_{c^2(s_1, \kappa_n(r))} b\|_{\infty}\big\} \rd r \\
        &\lesssim [b]_{\mathrm{Lip}}\int_{s^{\prime}}^t \big\{ \|\bE^{s_1}[B^H_{r} - B^H_{\kappa_n(r)}]\|_{L^p} + (\kappa_n(r) - s_1)^{-H}\big( (r - s_1)^{2H} - (\kappa_n(r) - s_1)^{2H} \big) \big\} \rd r\\
        &=: [b]_{\mathrm{Lip}} (I_{2, 1} + I_{2, 2}).
    \end{aligned}
\]
    Since
    \begin{equation}\label{ineq_r_s1}
        \kappa_n(r) - s_1 \ge \kappa_n(s) - s + 2/n  + t - s \ge 1/n + t - s \ge r - \kappa_n(r),
    \end{equation}
    for any $r \in [s^{\prime}, t]$, applying (iii) in Lemma \ref{Lem_fBM} yields
    \begin{align}
        \|\bE^{s_1}[B^H_{r} - B^H_{\kappa_n(r)}]\|_{L^p} \lesssim (r - \kappa_n(r))(r - s_1)^{H - 1}.
    \end{align}
    Therefore
    \begin{align}
        I_{2, 1} \lesssim n^{-1} \int_{s^{\prime}}^t (r - s_1)^{H - 1} \rd r \lesssim n^{-1} (t - s)^{H}.
    \end{align}
    By \eqref{ineq_r_s1} for any $r \in [s^{\prime}, t]$, we have
    \begin{equation}\label{est_r_s}
        (r - s_1)^{2H} - (\kappa_n(r) - s_1)^{2H} \lesssim 
        \begin{cases}
            n^{-1} (r - s_1)^{2H - 1} & H \in (1/2, 1)\\
            n^{-1} (\kappa_n(r) - s_1)^{2H - 1}  & H \in (0, 1/2]
        \end{cases}
        \lesssim n^{-1} (t - s)^{2H - 1}.
    \end{equation}
    Therefore, using $t - s \lesssim \kappa_n(r) - s_1$,
    \begin{equation}
        I_{2, 2} \lesssim n^{-1} \int_{s^{\prime}}^t (\kappa_n(r) - s_1)^{-H} (t - s)^{2H - 1} \rd r \le n^{-1} (t - s)^{H}.
    \end{equation}
    For $H\le 1/2$, since $n^{-1}\lesssim t-s$ in the present case, the bound $n^{-1}(t-s)^H$ is dominated by $n^{-(1/2+H)+\varepsilon}(t-s)^{1/2+\varepsilon}$.
    Thus
    \begin{equation}
        I_{2, 1} + I_{2, 2} \lesssim  R_{H,\varepsilon}^n(s,t). 
    \end{equation}
    If \(t-s'\le 1/n\), the same argument as for $I_1$ yields the same bound. Therefore,
    \begin{equation}\label{est_cond_1}
        \|A_{s,t}\|_{L^p} \lesssim [b]_{\mathrm{Lip}} R_{H,\varepsilon}^n(s,t).
    \end{equation}
    This verifies (1) in Lemma \ref{Lem_Shifted_SSL}.

    We next verify the estimate corresponding to \eqref{cond_shiftSSL_2} in Lemma \ref{Lem_Shifted_SSL}.
    Let $s, t \in \Delta_{[0,1]}^{\text{shift}}, \quad u \coloneqq \frac{t+s}{2}$, and set $s_2 \coloneqq s - (u-s)$, $s_3 \coloneqq s, \ s_4 \coloneqq u, \ s_5 \coloneqq t$.
    Note that $0 \le s_1 \le s_2 \le s_3 \le s_4 \le s_5 \le t$.
    \begin{align}
        &\bE^{s-(t - s)} [\delta A_{s,u,t}] = \bE^{s_1} [\delta A_{s_3, s_4, s_5}] \\
        &= \bE^{s_1} \Big[ \bE^{s_2} \Big[ \int_{s_3}^{s_4} \{ b(B^H_r + \bE^{s_1}[\psi_r]) - b(B^H_{\kappa_n(r)} + \bE^{s_1}[\psi_r])  \\
        &\qquad \qquad \qquad \Big. - b(B^H_r + \bE^{s_2}[\psi_r]) + b(B^H_{\kappa_n(r)} + \bE^{s_2}[\psi_r]) \} \rd r \Big] \\
        &\qquad \quad +  \bE^{s_3} \Big[ \int_{s_4}^{s_5} \{ b(B^H_r + \bE^{s_1}[\psi_r]) - b(B^H_{\kappa_n(r)} + \bE^{s_1}[\psi_r])   \\
        &\qquad \qquad \qquad \Big. \Big. - b(B^H_r + \bE^{s_3}[\psi_r]) + b(B^H_{\kappa_n(r)} + \bE^{s_3}[\psi_r]) \} \rd r \Big] \Big] \\
        &\eqqcolon I_3 + I_4. \label{est_delta_A_Lip}
    \end{align}
    We split the proof into the cases $t-s\ge 4/n$ and $t-s<4/n$.

    \textbf{Case 1:}  $t - s \ge 4/n$.
    Note that if $r \in [s_3, s_4]$ then we have
    \begin{equation}\label{ineq_r_s_2}
        \kappa_n(r) - s_2 \ge \kappa_n(s) - s + (u-s) \ge -\frac{1}{n} + \frac{t - s}{2} \ge \frac{t - s}{4} \ge \frac{1}{n}. 
    \end{equation}
    Therefore using (ii) in Lemma \ref{Lem_fBM},
    \begin{equation}\label{ineq_I_3_Lip}
        I_3 = \bE^{s_1} \Big[ \int_{s_3}^{s_4} \Big\{I_{3, 1} + I_{3, 2}\Big\} \rd r \Big],
    \end{equation}
    where
    \[
    \begin{aligned}
        I_{3, 1} &:=  P_{c^2(s_2, r)}b(\bE^{s_2}[B^H_r] + \bE^{s_1}[\psi_r]) - P_{c^2(s_2, \kappa_n(r))} b(\bE^{s_2}[B^H_r] + \bE^{s_1}[\psi_r])   \\
        &\quad  - P_{c^2(s_2, r)}b(\bE^{s_2}[B^H_r] + \bE^{s_2}[\psi_r]) + P_{c^2(s_2, \kappa_n(r))}b(\bE^{s_2}[B^H_r] + \bE^{s_2}[\psi_r]) ,\\
        I_{3, 2} &:= P_{c^2(s_2, \kappa_n(r))} b(\bE^{s_2}[B^H_r] + \bE^{s_1}[\psi_r]) - P_{c^2(s_2, \kappa_n(r))} b(\bE^{s_2}[B^H_{\kappa_n(r)}] + \bE^{s_1}[\psi_r]) \\
        &\quad - P_{c^2(s_2, \kappa_n(r))} b(\bE^{s_2}[B^H_r] + \bE^{s_2}[\psi_r]) + P_{c^2(s_2, \kappa_n(r))}b(\bE^{s_2}[B^H_{\kappa_n(r)}] + \bE^{s_2}[\psi_r]).
    \end{aligned}
    \]
    By (iv) in Lemma \ref{Lem_P_t}, we have
    \begin{align}
        |I_{3, 1}| &\le [P_{c^2(s_2, r)}b - P_{c^2(s_2, \kappa_n(r))}]_{\mathrm{Lip}}|\bE^{s_1}[\psi_r] - \bE^{s_2}[\psi_r]|\\
        &\lesssim [b]_{\mathrm{Lip}}(\kappa_n(r) - s_2)^{-2 H}\{(r - s_2)^{2H} - (\kappa_n(r) - s_2)^{2H}\}|\bE^{s_1}[\psi_r] - \bE^{s_2}[\psi_r]|. 
    \end{align}
    By \eqref{ineq_r_s_2}, for any $r \in [s, t]$, we have
    \begin{equation}
        (r - s_2)^{2H} - (\kappa_n(r) - s_2)^{2H} \lesssim n^{-1}(t - s)^{2H - 1}.
    \end{equation}
    By Lemma \ref{Lem_stochastic_regularity}, for any $q \ge 2$ we have
    \begin{align}
        \| \bE^{s_1} [| \bE^{s_1}[\psi_r] - \bE^{s_2}[\psi_r] |] \|_{L^q} &\le \| \bE^{s_1} [| \psi_r - \bE^{s_1}[\psi_r] |] + \bE^{s_1} [ \bE^{s_2} [| \psi_r - \bE^{s_2}[\psi_r] |] ] \|_{L^q} \\
        &\le C [b]_{\mathrm{Lip}} (r - s_1)^{1 + H}, \label{ineq_sto_reg_Lip}
    \end{align}
    for some constant $C > 0$ independent of $n$.
    Combining the above estimates,
    \begin{align}
        \Big\|\bE^{s_1}\Big[\int_{s_3}^{s_4} I_{3, 1} \rd r\Big] \Big\|_{L^p} &\lesssim  [b]_{\mathrm{Lip}}^2 n^{-1} \int_{s_3}^{s_4} (t - s)^{-1} (r - s_1)^{1 + H} \rd r \\
        &\lesssim [b]_{\mathrm{Lip}}^2 n^{-1} (t - s)^{1 + H}.\label{est_I_3_1_Lip}
    \end{align}
    Using the elementary estimate
    \[
    |f(x_1)-f(x_2)-f(x_3)+f(x_4)|
    \le
    [\nabla f]_{\mathrm{Lip}}|x_2-x_1||x_3-x_1|
    +
    [f]_{\mathrm{Lip}}|x_1-x_2-x_3+x_4|,
    \]
    valid for $f\in C^1_b$ with Lipschitz gradient, we obtain
    \begin{align}
        |I_{3, 2}| &\le [\nabla P_{c^2(s_2, \kappa_n(r))} b]_{\mathrm{Lip}}|\bE^{s_2}[B^H_r] - \bE^{s_2}[B^H_{\kappa_n(r)}] | |\bE^{s_1}[\psi_r] - \bE^{s_2}[\psi_r]|\\
        &\lesssim [b]_{\mathrm{Lip}} (\kappa_n(r) - s_2)^{-H}|\bE^{s_2}[B^H_r] - \bE^{s_2}[B^H_{\kappa_n(r)}] | |\bE^{s_1}[\psi_r] - \bE^{s_2}[\psi_r]|.
    \end{align}
    Since $r - \kappa_n(r) \le 1/n  \le \kappa_n(r) - s_2$ for any $r \in (s_3, s_4)$, applying (iii) in Lemma \ref{Lem_fBM} yields
    \begin{equation}\label{ineq_fBM_Lip}
        \|\bE^{s_2}[B^H_r] - \bE^{s_2}[B^H_{\kappa_n(r)}] \|_{L^{2p}} \lesssim (r - \kappa_n(r))(r - s_2)^{H - 1}.
    \end{equation}
    Using \eqref{ineq_r_s_2}, \eqref{ineq_sto_reg_Lip} and \eqref{ineq_fBM_Lip}
    \begin{align}
        \Big\|\bE^{s_1}\Big[\int_{s_3}^{s_4} I_{3, 2} \rd r\Big] \Big\|_{L^p} &\le C[b]_{\mathrm{Lip}}^2 \int_{s_3}^{s_4} (t - s)^{-H} (r - \kappa_n(r)) (r - s_2)^{H - 1} (r - s_1)^{1 + H} \rd r \\
        &\lesssim [b]_{\mathrm{Lip}}^2 n^{-1} (t - s)^{1 + H}, \label{est_I_3_2_Lip}
    \end{align}
    where we used $r - s_2 \ge (t - s)/2$.
    Combining \eqref{ineq_I_3_Lip}, \eqref{est_I_3_1_Lip} and \eqref{est_I_3_2_Lip},
    \begin{equation}\label{est_I_3_Lip}
        \|I_3\|_{L^p} \lesssim [b]_{\mathrm{Lip}}^2 n^{-1} (t - s)^{1 + H}.
    \end{equation}
    The term $I_4$ is estimated in the same way as $I_3$, with $s_2$ replaced by $s_3$. 
    Indeed, for $r\in[s_4,s_5]$ we have
    \[
    \kappa_n(r)-s_3\ge \kappa_n(u)-u+u-s\ge \frac{t-s}{4}\ge \frac1n,
    \]
    so that the same heat-kernel estimates apply. Hence
    \begin{equation}\label{est_I_4_Lip}
        \|I_4\|_{L^p}\lesssim [b]^2_{\mathrm{Lip}}n^{-1}(t-s)^{1+H}.
    \end{equation}

    \textbf{Case 2:} If $t - s < 4/n$, then the same estimate as in \eqref{ineq_sto_reg_Lip} gives
    \begin{equation*}
        \|I_3\|_{L^p}
        \lesssim [b]_{\mathrm{Lip}}^2(t-s)^{2+H}
        \lesssim [b]_{\mathrm{Lip}}^2n^{-1}(t-s)^{1+H},
    \end{equation*}
    and the same bound holds for $I_4$.

    Combining \eqref{est_delta_A_Lip}, \eqref{est_I_3_Lip}, \eqref{est_I_4_Lip}, and the estimates in Case 2,
    \begin{equation}\label{est_cond_2}
        \|\bE^{s_1}[\delta A_{s, u, t}]\|_{L^p} \lesssim [b]_{\mathrm{Lip}}^2 \,n^{-1} (t - s)^{1 + H},
    \end{equation}
    and \eqref{cond_shiftSSL_2} holds.

    It remains to check the convergence condition (i) in Lemma \ref{Lem_Shifted_SSL}.
    This follows by the same argument as in the final step of \citet[Lemma 4.7]{BuDaGe25}.
    Lemma \ref{Lem_Shifted_SSL} then gives the desired estimate.
\end{proof}

\section{Proof of the main theorem}
We prove Theorem \ref{Thm_main} by a local-to-global argument, following \citet[Theorem 1.5]{Bu25}.
\begin{proof}
    We only prove the case $H \in (1/2, 1)$.
    The case $H\in(0,1/2]$ is proved in the same way, using the second estimate in Lemma \ref{Lem_Lipschitz}.
    It suffices to prove the theorem for $p > 4$.

    Fix $(s, t) \in \Delta_{[0, 1]}$.
    Take any $(u, v) \in \Delta[s, t]$.
    \begin{align}
        \| (\varphi_v - \varphi_v^n) - (\varphi_{u} - \varphi_{u}^n) \|_{L^p} &\le \Big\| \int_u^v \{ b(B^H_r + \varphi_r) - b(B^H_r + \varphi_r^n) \} \, \rd r \Big\|_{L^p} \\
        &\quad+ \Big\| \int_u^v \{ b(B^H_r + \varphi_r^n) - b(B^H_r + \varphi_{\kappa_n(r)}^n) \} \, \rd r \Big\|_{L^p} \\
        &\quad+ \Big\| \int_u^v \{ b(B^H_r + \varphi_{\kappa_n(r)}^n) - b(B^H_{\kappa_n(r)} + \varphi_{\kappa_n(r)}^n) \} \, \rd r \Big\|_{L^p} \\
        &\eqqcolon J_1 + J_2 + J_3.
    \end{align}
    Since $b$ is Lipschitz continuous, we have
    \[
    \begin{aligned}
    J_1
    &\le [b]_{\mathrm{Lip}}[\varphi-\varphi^n]_{C^0L^p([u,v])}(v-u),\\
    J_2
    &\le [b]_{\mathrm{Lip}} (v-u) \sup_{r \in [u,v]} \Big\| \int_{\kappa_n(r)}^r b(X_{\kappa_n(a)}^n) \, \rd a \Big\|_{L^p} \lesssim [b]_{\mathrm{Lip}}n^{-1}(v-u).
    \end{aligned}
    \]
    Combining the above and the estimate of Lemma \ref{Lem_Lipschitz}, we have
    \begin{equation}
        \| \varphi_v - \varphi_v^n - (\varphi_u - \varphi_u^n) \|_{L^p} \le C_1 [\varphi - \varphi^n]_{{C}^0 L^p([u,v])}(v-u) + C_2 n^{-1}(v-u) + C_3 n^{-1} (v-u)^{H}.
    \end{equation}
    for some constants $C_1$, $C_2$, $C_3 > 0$.
    Dividing both sides by \((v-u)^{1/2}\) and taking the supremum over $(u,v) \in \Delta_{[s,t]}$, we obtain
    \begin{align*}
        [\varphi - \varphi^n]_{{C}^{1/2} L^p([s,t])} &\le C_1 [\varphi - \varphi^n]_{{C}^0 L^p([s,t])} (t - s)^{1/2} + C_2 n^{-1}(t - s)^{1/2} \\
        &\quad+ C_3 n^{-1} (t - s)^{H - 1/2}\\
        &\le C_1 (t - s)^{1/2} \left( \| \varphi_s - \varphi_s^n \|_{L^p} + [\varphi - \varphi^n]_{{C}^{1/2} L^p([s,t])} (t - s)^{\frac{1}{2}} \right) \\
        &+ C_2 n^{-1} (t - s)^{1/2} + C_3 n^{-1} (t - s)^{H - 1/2}
    \end{align*}
    Taking $\ell > 0$ such that $C_1 \ell < \frac{1}{2}$, for any $(s, t) \in \Delta[0, 1],\ t - s \le \ell$, we have
    \begin{equation}
        [\varphi - \varphi^n]_{C^{1/2} L^p([s,t])} \lesssim \| \varphi_s - \varphi_s^n \|_{L^p} + n^{-1}.
    \end{equation}
    By Lemma \ref{Lem_4.8}, we have
    \begin{equation}
        [\varphi - \varphi^n]_{C^{1/2} L^p([0,1])} \lesssim n^{-1}.
    \end{equation}
    Since $p>4$, we have $1/4 \in (0,1/2-1/p)$.
    Hence by Kolmogorov's continuity theorem \citep[Theorem A.11]{FrVi10}, there exist a constant $C_0 > 0$ such that
    \begin{equation}
        \| [X - X^n]_{{C}^{1/4} ([0,1]; \bR^d)} \|_{L^p} = \| [\varphi - \varphi^n]_{{C}^{1/4} ([0,1]; \mathbb{R}^d)} \|_{L^p} \le C_0 n^{-1},
    \end{equation}
    where $[\cdot]_{C^{1/4}([0,1];\bR^d)}$ denotes the usual Hölder seminorm.
    In particular,
    \begin{equation}
        \Big\| \sup_{t \in [0,1]} |X_t - X_t^n| \Big\|_{L^p} \le C_0 n^{-1}.
    \end{equation}
\end{proof}

\section*{Declaration of generative AI and AI-assisted technologies in the writing process}

The author used ChatGPT for translation, proofreading, and language editing. The author reviewed and edited the content and takes full responsibility for the article.

\section*{Data availability}

No data was used for the research described in the article.

\bibliographystyle{elsarticle-harv}

\end{document}